\theoremstyle{plain}
\newtheorem{theorem}{Theorem}[section]
\newtheorem{corollary}{Corollary}[section]
\newtheorem{proposition}{Proposition}[section]
\theoremstyle{definition}
\newtheorem{definition}{Definition}[section]
\newtheorem{remark}{Remark}[section]
\newtheorem{example}{Example}[section]
\begin{document}

\title{On a  Lichnerowicz type cohomology attached to a function}
\author{Cristian Ida}
\date{}
\maketitle

\begin{abstract}
In this paper we define a new cohomology of a smooth manifold called Lichnerowicz type cohomology attached to a function. Firstly, we study some basic properties of this cohomology as: a de Rham type isomorphism, dependence on the function, singular forms, relative cohomology, Mayer-Vietoris sequence, homotopy invariance and next, a regular case is considered. The notions are introduced using techniques from the study of two
cohomologies of a smooth manifold: the Lichnerowicz cohomology and the cohomology attached to a function. 
\end{abstract}

\medskip

\begin{flushleft}
\strut \textbf{2010 Mathematics Subject Classification}: 58A10, 58A12.

\textbf{Key Words}: Differential forms; Lichnerowicz cohomology; cohomology attached to a function.
\end{flushleft}

\maketitle

\section{Introduction}
\setcounter{equation}{0}

Let us consider a $n$-dimensional smooth manifold $M$ and $\theta$ a closed $1$-form on $M$. Denote by $\Omega^r(M)$ the set of all $r$-differential forms on $M$ and consider the twisted operator $d_{\theta}:\Omega^r(M)\rightarrow\Omega^{r+1}(M)$ defined by $d_{\theta}=d-\theta\wedge$, where $d$ is the usual exterior derivative. Since $d\theta=0$, we easily obtain $d_{\theta}^2=0$. The differential complex $(\Omega^\bullet(M),d_\theta)$ is called the \textit{Lichnerowicz complex} of $M$; its cohomology groups $H^{\bullet}_{\theta}(M)$ are called the \textit{Lichnerowicz cohomology groups} of $M$. This is the classical Lichnerowicz cohomology,  motivated by Lichnerowicz's work \cite{L} or Lichnerowicz-Jacobi cohomology on Jacobi and locally conformal symplectic geometry manifolds, see \cite{Ba, L-L-M}. It is also known in literature as Morse-Novikov cohomology and  plays an important role when studying the geometry, topology and Morse theory of the underlying manifold $M$, see for instance \cite{O-S, O-V1, O-V2, Pa}. We also notice that Vaisman \cite{Va5}, studied it under the name of "adapted cohomology" in the context of locally conformally K\"{a}hler manifolds. Locally, the Lichnerowicz cohomology complex becames the de Rham complex after a change $\varphi\mapsto e^{-f}\varphi$ with $f$ a smooth function which satisfies $df=\theta$, that is $d_{\theta}$ is the unique differential in $\Omega^{\bullet}(M)$ which makes the multiplication by the smooth function $e^{-f}$ an isomorphism of cochain complexes $e^{-f}:(\Omega^{\bullet}(M),d_{\theta})\rightarrow(\Omega^{\bullet}(M),d)$. In the case when $\theta$ is exact, there is a related differential $\delta_{t,f}=d+tdf\wedge$ ($t\in\mathbb{R}$ and $f$ is a Morse function on $M$) introduced in \cite{Wi} by Witten in order to obtain an analytic proof of the Morse inequalities. Also, the cohomology of the deformed Witten differential is very useful in the study of the topology of the fiber $f^{-1}(c)$, see \cite{D-S, F-S}.

Other important applications of the Lichnerowicz cohomology appear when studying locally (globally) conformal structures: (co)symplectic, (co)K\"{a}hler, Jacobi, Dirac etc., and in many cases it is an invariant at conformal changes. For more about this cohomology see for instance \cite{Ba, H-R, L-L-M, Va5, Wa}. Also, such a cohomology can be generalized in the context of Jacobi algebroid, that is a pair $(A,\theta)$ where $A=(A,[\cdot,\cdot]_A,\rho_A)$ is a Lie algebroid over a manifold $M$ and $\theta\in\Gamma(A^*)$ is a $1$-cocycle, i.e $d_A\theta=0$, see \cite{C-N-C, I-M}. Then the twisted cohomological operator is given by $d_{A,\theta}=d_A-\theta\wedge$, and the Lichnerowicz cohomology of $(M,\theta)$ is just the Jacobi Lie algebroid cohomology of usual Lie algebroid $(TM,[\cdot,\cdot], Id)$ with the $1$-cocycle $\theta$.

On the other hand, in \cite{Mo2},  Monnier gave the definition and basic properties of a new cohomology of a manifold, called \textit{cohomology attached to a function}. The definition is the following: If $f$ is a smooth function on a smooth manifold $M$,  then we can define the linear operator $d_f:\Omega^r(M)\rightarrow\Omega^{r+1}(M)$ by
\begin{displaymath}
d_f\varphi=fd\varphi-rdf\wedge\varphi\,,\,\forall\,\varphi\in\Omega^r(M).
\end{displaymath}
It is easy to see that $d_f^2=0$, and so, we have a differential complex $(\Omega^\bullet(M),d_f)$ which is called the \textit{differential complex attached to the function $f$} of $M$; its cohomology groups $H^{\bullet}_{f}(M)$ are called the \textit{cohomology groups attached to the function $f$} of $M$. This cohomology was considered for the first time in \cite{Mo1} in the context of Poisson geometry, and more generally, Nambu-Poisson geometry.

The main difference between the operators $d_f$ and $d_{\theta}$ is given by the fact that $d_f$ is an antiderivation, i.e. 
\begin{equation}
d_f(\varphi\wedge\psi)=d_f\varphi\wedge\psi+(-1)^{\deg\varphi}\varphi\wedge d_f\psi, \,\forall\,\varphi,\psi\in\Omega^{\bullet}(M),
\label{I3}
\end{equation}
while $d_{\theta}$ is not an antiderivation, and it satisfies
\begin{equation}
d_{\theta}(\varphi\wedge\psi)=d\varphi\wedge\psi+(-1)^{\deg\varphi}\varphi\wedge d_{\theta}\psi, \,\forall\,\varphi,\psi\in\Omega^{\bullet}(M).
\label{I4}
\end{equation}
Moreover,  according with \cite{Mo2}, the $d_f$--cohomology is just the Lie algebroid cohomology associated with a certain Lie algebroid structure $([\cdot,\cdot]_f,\rho_f)$ on $TM$, where the anchor is defined by $\rho_f(X)=fX$ and the Lie bracket is defined by $[X,Y]_f=(1/f)[fX,fY]$. Thus, $d_f$-cohomology is a Lie algebroid cohomology, while $d_\theta$-cohomology is a Jacobi algebroid cohomology.

In this paper we introduce a generalization of the usual Lichnerowicz cohomology starting from the cohomology attached to a function. We observe that if $\theta$ is a closed one form on an arbitrary smooth manifold $M$ and $f\in C^\infty(M)$ then $d_f(f\theta)=0$, that is $(TM,f\theta)$ is a Jacobi algebroid where $(TM,[\cdot,\cdot]_f,\rho_f)$ is the above Lie algebroid. Then, it is natural to consider the cohomological differential operator $d_{f,\theta}:\Omega^r(M)\rightarrow\Omega^{r+1}(M)$ defined by 
\begin{equation}
d_{f,\theta}\varphi=d_{f}\varphi-f\theta\wedge\varphi\,,\,\,\varphi\in\Omega^r(M),
\label{II1}
\end{equation} 
which satisfies $d_{f,\theta}\circ d_{f,\theta}=0$. Thus, we obtain the differential complex $(\Omega^\bullet(M),d_{f,\theta})$ which is called  the \textit{Lichnerowicz type complex attached to the function $f$} of $M$ and its cohomology groups $H^{\bullet}_{f,\theta}(M)$ are called the \textit{Lichnerowicz type cohomology groups attached to the function $f$} of $M$. This is just the Jacobi algebroid cohomology of $(TM,f\theta)$ with the Lie algebroid structure $([\cdot,\cdot]_f,\rho_f)$ and, using some arguments as in the study of $d_f$-cohomology, the main goal of this paper is to study the main properties of this new cohomology of a smooth manifold. 

The paper is organized as follows: In the second section we make some remarks about our cohomology when the one form $\theta$ is exact and we relate it with globally conformal cosymplectic manifolds and with Lichnerowicz-Poisson cohomology twisted by a Hamiltonian vector field on oriented $2$-dimensional Poisson manifolds. In the third section we  study some basic properties of the Lichnerowicz type cohomology attached to a function. The notions are introduced by combining results from the study of Lichnerowicz cohomology and from cohomology attached to a function, and we show that many properties of the de Lichnerowicz cohomology and the cohomology attached to a function still have their analogues within the our Lichnerowicz type cohomology attached to a function. We notice that certain properties concerning to our cohomology are closely related to Lichnerowicz cohomology, while anothers one are closely related to cohomology attached to a function. Firstly, we prove a de Rham type isomorphism theorem for our cohomology (Theorem 3.2), we prove that this cohomology is isomorphic to Lichnerowicz cohomology of singular forms (Proposition 3.1), we discuss how the cohomology varies when the function $f$ changes (Proposition 3.2) and  how it depends on the class of $\theta$ (Proposition 3.3). In particular, we show that if the function $f$ does not vanish, then the Lichnerowicz type cohomology attached to a function is isomorphic to Lichnerowicz cohomology (Corollary 3.1). Next we study a relative cohomology associated to our cohomology and we will show that it is possible to write a Mayer-Vietoris exact sequence (Theorem 3.3). We also give an appropriate notion of homotopy, but it is an open question whether the cohomology is homotopy invariant in general. In the four section we consider the regular case, i.e., the case where the function $f$ does not have singularities in a neighborhood of $S = f^{-1}(\{0\})$. In a similar manner with the study from \cite{Mo2} concerning to cohomology attached to a function,  we can relate our cohomology with the Lichnerowicz cohomology of $M$ and of $S$ (Theorem 4.1). Also, in this regular case, we obtain a homotopy invariance (Proposition 4.1) and (Proposition 4.2).

\section{Some remarks when $\theta$ is exact}
\setcounter{equation}{0}
In this section we make some remarks about our Lichnerowicz cohomology attached to a function in the case when the $1$-form $\theta$ is exact. 
\begin{remark}
Firstly, we remark that the cohomological operators $d_\theta$ and $d_f$ can be canonically associated to a globally conformal almost cosymplectic manifold as follows. According to \cite{Olz, Va6} the \textit{locally conformal almost cosymplectic} manifolds are defined to be almost contact metric manifolds whose almost contact and fundamental forms $\eta$ and $\Phi$ are related by 
\begin{equation}
\label{lcc}
d\eta=\omega\wedge\eta\,\,{\rm and}\,\,d\Phi=2\omega\wedge\Phi
\end{equation}
for some closed 1-form $\omega$. Moreover, if $\omega$ is exact then their are called \textit{globally conformal almost cosymplectic} manifolds, see \cite{C-M}. From \eqref{lcc} it follows that $d_\omega\eta=0$ and $d_{2\omega}\Phi=0$. Also, using \eqref{I4} we have $d_{2r\omega}\Phi^r=0$ and $d_{(2r+1)\omega}(\eta\wedge\Phi^r)=0$, $r=0,\ldots,m$, $\dim M=2m+1$, that is, a (locally) globally conformal almost cosymplectic manifold $M$ has the following cohomological invariants in the Lichnerowicz cohomology: $[\Phi^r]\in H^{2r}_{2r\omega}(M)$ and $[\eta\wedge\Phi^r]\in H^{2r+1}_{(2r+1)\omega}(M)$, $r=0,\ldots,m$. Now, if $M$ is a globally conformal almost cosymplectic manifold with exact $1$-form $\omega$ given by $\omega=d(\log f)$ for some positive function $f\in C^\infty(M)$, the first condition of \eqref{lcc} is equivalent with $d_f\eta=0$ and the second condition of \eqref{lcc} is equivalent with $d_f\Phi=0$. Also, using \eqref{I3} we have $d_f\Phi^r=0$ and $d_f(\eta\wedge\Phi^r)=0$, $r=0,\ldots,m$. Thus, a globally conformal almost cosymplectic manifold $M$ has another cohomological invariants in the cohomology attached to the function $f$, namely $[\Phi^r]\in H^{2r}_f(M)$ and $[\eta\wedge\Phi^r]\in H^{2r+1}_f(M)$, $r=0,\ldots,m$. 

As $\eta$ is $d_f$-closed it is a $1$-cocycle for the Lie algebroid $(TM,[\cdot,\cdot]_f,\rho_f)$ and this leads to a Jacobi algebroid cohomology of a globally conformal almost cosymplectic manifold $H^\bullet_{f,\eta}(M)$ with the cohomological operator $d_{f,\eta}=d_f-\eta\wedge$.  

Now, using $d_f(\eta\wedge\Phi^r)=0$ we get $d_{f,\eta}(\eta\wedge\Phi^r)=0$. Then, for every $r=0,\ldots,m$ we have another cohomology class $[\eta\wedge\Phi^r]$ in $H^{2r+1}_{f,\eta}(M)$. Moreover, it is easy to see that $\eta\wedge\Phi^r=d_{f,\eta}(-\Phi^r)$, which implies that this cohomology class vanishes. 

Also, let us consider $(\widetilde{\eta}=\alpha\eta, \widetilde{\Phi}=\alpha^2\Phi)$ ($\alpha$ is a positive smooth function on $M$) another globally conformal cosymplectic structure on $M$ with exact Lee form $\widetilde{\omega}=d\log\widetilde{f}$, where $\widetilde{f}=\alpha f$. The structures $(\eta,\Phi)$ and $(\widetilde{\eta},\widetilde{\Phi})$ are usually called \textit{conformally equivalent} globally conformal cosymplectic structures on $M$. Then, a direct computation leads to 
\begin{displaymath}
d_{\widetilde{f},\widetilde{\eta}}\varphi=\alpha^{r+1}d_{f,\eta}\left(\frac{\varphi}{\alpha^r}\right),\,\,\varphi\in\Omega^r(M),
\end{displaymath}
or equivalently, $H^\bullet_{f,\eta}(M)\cong H^\bullet_{\widetilde{f},\widetilde{\eta}}(M)$. Hence, $H^\bullet_{f,\eta}(M)$ is an invariant of the class of conformally equivalent globally conformal cosymplectic structures on $M$.  

\end{remark}

\begin{remark}
Let $(M,\Pi)$ be a $n$-dimensional Poisson manifold, that is $\Pi\in\mathcal{V}^2(M)$ satisfies $[\Pi,\Pi]_{SN}=0$, where $[\cdot,\cdot]_{SN}:\mathcal{V}^k(M)\times\mathcal{V}^l(M)\rightarrow\mathcal{V}^{k+l-1}(M)$ denotes the Schouten-Nijenhuis bracket on multi-vectors fields, see for instance \cite{Va7}. There is a remarkable cohomology associated with a Poisson manifold $(M,\Pi)$, called the \textit{Lichnerowicz-Poisson cohomology} denoted by $H^\bullet_{LP}(M,\Pi)$ and realized as the homology of the differential complex $(\mathcal{V}^\bullet(M),d_\Pi)$, where $d_{\Pi}:\mathcal{V}^k(M)\rightarrow\mathcal{V}^{k+1}(M)$, $d_\Pi(X)=[X,\Pi]_{SN}$. Note that $d_\Pi(\alpha)=X_\alpha$, where $X_\alpha$ is the Hamiltonian vector field of $\alpha\in C^\infty(M)$. In the case when the Poisson manifold $(M,\Pi)$ is $2$-dimensional and oriented, Monnier proves in \cite{Mo1, Mo2} that the LP cohomology is isomorphic with the cohomology attached to a function as follows: Consider $\nu\in\Omega^2(M)$ be a fixed volume form of $(M,\Pi)$ and the smooth function $f:=\imath_\Pi\nu$. Then there exist the maps $\phi^p:\mathcal{V}^p(M)\rightarrow\Omega^p(M)$, $p=0,1,2$ defined by

\begin{equation}
\label{pois1}
\phi^0:C^\infty(M)\rightarrow C^\infty(M)\,,\,\phi^0(\alpha)=\alpha\,,\,\forall\,\alpha\in C^\infty(M)=\mathcal{V}^0(M),
\end{equation}
\begin{equation}
\label{pois2}
\phi^1:\mathcal{V}^1(M)\rightarrow\Omega^1(M)\,,\phi^1(X)=-\imath_X\nu\,,\,\forall\,X\in\mathcal{V}^1(M),
\end{equation}
\begin{equation}
\label{pois3}
\phi^2:\mathcal{V}^2(M)\rightarrow\Omega^2(M)\,,\,\phi^2(Y)=(\imath_Y\nu)\nu\,,\,\forall\,Y\in\mathcal{V}^2(M)
\end{equation}
which yields an isomorphism of differential complexes $\phi^p:(\mathcal{V}^p(M),d_\Pi)\rightarrow(\Omega^p(M),d_f)$, $p=0,1,2$, that is
\begin{equation}
\label{pois4}
d_f(\phi^0(\alpha))=\phi^1(d_\Pi(\alpha)) \,\,{\rm and}\,\, d_f(\phi^1(X))=\phi^2(d_\Pi(X))\,,\,\forall\,\alpha\in C^\infty(M)\,,\,\forall\,X\in\mathcal{V}^1(M),
\end{equation}
and consequently, an isomorphism between $H^\bullet_{LP}(M,\Pi)$ and $H^\bullet_f(M,\Pi)$.

In order to relate our Lichnerowicz cohomology attached to a function with a $2$-dimensional Poisson manifold, we will consider the  Lichnerowicz-Poisson cohomology of $(M,\Pi)$ twisted by the Hamiltonian vector field $X_\alpha$ of a smooth function $\alpha\in C^\infty(M)$, i.e. the homology of the differential complex $(\mathcal{V}^\bullet(M),d_{\Pi,\alpha})$, where
\begin{equation}
\label{pois5}
d_{\Pi,\alpha}(X)=d_\Pi(X)-X_\alpha\wedge X\,,\,X\in\mathcal{V}^p(M),\,p=0,1,2.
\end{equation} 
Note that a such twisted cohomology can be defined in a more general setting replacing the Hamiltonian vector field $X_\alpha$ by a Poisson vector field $Z\in\mathcal{V}^1(M)$, i.e. $d_\Pi(Z)=0$, see \cite{Cic}. Let us denote by $H^\bullet_{LP,\alpha}(M,\Pi)$ the twisted LP cohomology by the Hamiltonian vector field $X_\alpha$. Then we have
\begin{proposition}
Let $(M,\Pi)$ be an oriented $2$-dimensional Poisson manifold and $\alpha\in C^\infty(M)$. Then the cohomologies $H^\bullet_{LP,\alpha}(M,\Pi)$ and $H^\bullet_{f,\theta}(M,\Pi)$ are isomorphic, where $\theta=d\alpha$.
\end{proposition}
\begin{proof}
For $g\in C^\infty(M)$ and $\theta=d\alpha$ we have
\begin{equation}
\label{pois6}
d_{f,\theta}(\phi^0(g))=\phi^1(d_{\Pi,\alpha}(g)).
\end{equation}
Indeed
\begin{eqnarray*}
d_{f,\theta}g&=&d_fg-fgd\alpha =d_fg-gd_f\alpha\\
&=&\phi^1(d_\Pi(g))-g\phi^1(d_\Pi(\alpha))=\phi^1(d_\Pi(g)-gX_\alpha)\\
&=&\phi^1(d_{\Pi,\alpha}(g)),
\end{eqnarray*}
where we have used the first relation from \eqref{pois4} and $d_\Pi(\alpha)=X_\alpha$.

Now, starting from $d\alpha\wedge\nu=0$, for $X\in\mathcal{V}^1(M)$ we have $\imath_X(d\alpha\wedge\nu)=0$ which is equivalent with $(\imath_Xd\alpha)\nu=d\alpha\wedge\imath_X\nu$. Multiplying with $f$ we obtain
\begin{equation}
\label{pois7}
fd\alpha\wedge\imath_X\nu=f(\imath_Xd\alpha)\nu.
\end{equation}
But, $f(\imath_Xd\alpha)=\imath_X(fd\alpha)=\imath_X\phi^1(X_\alpha)=-\imath_X\imath_{X_\alpha}\nu=-\imath_{X_\alpha\wedge X}\nu$, hence the relation \eqref{pois7} becomes
\begin{displaymath}
fd\alpha\wedge\imath_X\nu=-(\imath_{X_\alpha\wedge X}\nu)\nu
\end{displaymath}
which is equivalent with
\begin{equation}
\label{pois8}
-fd\alpha\wedge\phi^1(X)=-\phi^2(X_\alpha\wedge X).
\end{equation}
Using the second relation of \eqref{pois4} we get
\begin{displaymath}
d_f(\phi^1(X))-fd\alpha\wedge\phi^1(X)=\phi^2(d_\Pi(X))-\phi^2(X_\alpha\wedge X)
\end{displaymath}
or
\begin{equation}
\label{pois9}
d_{f,\theta}(\phi^1(X))=\phi^2(d_{\Pi,\alpha}(X)).
\end{equation}
Thus, the isomorphism follows by \eqref{pois6} and \eqref{pois9}.
\end{proof}
\end{remark}
\begin{remark}
Another remark which relate our cohomological operator with some computations in \cite{Mo1, Mo2} is the following: Monnier also introduce another cohomological operator attached to a function defined by $d_{f,p}:\Omega^r(M)\rightarrow\Omega^{r+1}(M)$, where
\begin{displaymath}
d_{f,p}\varphi=fd\varphi-(r-p)df\wedge\varphi\,,\,p\in\mathbb{Z}\,,\,\varphi\in\Omega^r(M),
\end{displaymath}
and the cohomology of the differential complex $(\Omega^\bullet(M),d_{f,p})$ is denoted by $H^{\bullet}_{f,p}(M)$. As well as it is noted in Remark 2.3 from \cite{Mo2}, these cohomologies does not come from a Lie algebroid, but will we see that the cohomology $H^\bullet_{f,p}(M)$ come from the Jacobi algebroid cohomology $(TM,[\cdot,\cdot]_f,\rho_f)$ with the $1$-cocycle $f\theta$, where $\theta=-d(\log f^p)$ ($f$ is a positive smooth function on $M$). Indeed, using  \eqref{II1}, it is easy to see that in this case we have $d_{f,\theta}=d_{f,p}$, hence $H^\bullet_{f,\theta}(M)\cong H^\bullet_{f,p}(M)$ and some computations of these cohomologies can be found in \cite{Mo1, Mo2}. 
\end{remark}

\section{Basic properties of the Lichnerowicz type cohomology attached to a function}
\setcounter{equation}{0}

In this section we study several properties of this new cohomology in relation with some classical properties of Lichnerowicz cohomology and of cohomology attached to a function.

\subsection{A de Rham type theorem}
The spaces $H^\bullet_{f,\theta}(M)$ can be also obtained as the cohomology spaces of $M$ with coefficients in a sheaf, that is the sheaf $\Phi_{\theta}(M)$ of germs of smooth functions $\alpha$ on $M$ which are such that $d_{f,\theta}\alpha=fd_\theta\alpha=0$ (or $d_\theta\alpha=0$). More exactly, using the Poincar\'{e} type Lemma for the Lichnerowicz operator $d_\theta$ (see \cite{Va5}), we obtain a Poincar\'{e} type Lemma for the Lichnerowicz type cohomology attached to a function and, consequently, a de Rham type theorem for this cohomology.
\begin{theorem}
\label{pllf}
Let $f$ be a nonvanishing smooth function on $M$ and $\theta$ a closed one form on $M$. If $\varphi$ is a $d_{f,\theta}$-closed $r$-form defined on a neighborhood $U$  on $M$ and $r\geq 1$, then there exists a  $(r-1)$-form $\psi $ defined on some neighborhood $U^{'}\subset U$ and such that $d_{f,\theta}\psi=\varphi$ on $U^{'}$.
\end{theorem}
\begin{proof}
It is easy to see that for a nonvanishing smooth function $f$ on $M$ and $\varphi\in\Omega^r(M)$ we have
\begin{equation}
\label{x3}
d_{f,\theta}\varphi=f^{r+1}d_\theta\left(\frac{\varphi}{f^r}\right).
\end{equation}
Then, if $\varphi\in\Omega^r(U)$ is $d_{f,\theta}$-closed we obtain that $\varphi/f^r$ is $d_\theta$-closed, and applying the  Poincar\'{e} Lemma for the Lichnerowicz operator $d_\theta$ (see Proposition 3.1 from \cite{Va5}) it follows that there is  $\psi^\prime\in\Omega^{r-1}(U^\prime)$, $U^\prime\subset U$ such that $\varphi/f^r=d_\theta\psi^\prime$. Now, if we take $\psi=f^r\psi^\prime$ and we use \eqref{x3} we get $\varphi=d_{f,\theta}\psi$ on $U^\prime$.
\end{proof}
Let  $\Phi^{r}(M)$ the sheaf of germs of $r$-forms on $M$, $\Phi_{\theta}(M)$ the sheaf of germs of smooth functions $\alpha$ on $M$ which are such that $d_{f,\theta}\alpha=fd_\theta\alpha=0$ and $i:\Phi_{\theta}(M)\rightarrow \Phi^{0}(M)$ the natural inclusion. The sheafs $\Phi^{r}(M)$ are fine and taking into account Theorem \ref{pllf} it results that the following sequence of sheafs
\begin{displaymath}
0\longrightarrow \Phi_{\theta}(M)\stackrel{i}{\longrightarrow}\Phi^{0}(M)\stackrel{d_{f,\theta}}{\longrightarrow}\Phi^{1}(M)\stackrel{d_{f,\theta}}{\longrightarrow }\ldots\stackrel{d_{f,\theta}}{\longrightarrow }\Phi^{r}(M)\stackrel{d_{f,\theta}}{\longrightarrow}\ldots 
\end{displaymath}
is a fine resolution of $\Phi_{\theta}(M)$ and we denote by $H^r(M,\Phi_{\theta}(M))$ the cohomology groups of $M$ with coefficients in the sheaf $\Phi_{\theta}(M)$. Thus, we obtain a de Rham theorem for the  Lichnerowicz type cohomology attached to a function, that is
\begin{theorem}
The Lichnerowicz type cohomology groups attached to a function of $M$ are given by
\begin{equation}
H^{r}_{f,\theta}(M)\cong H^r(M,\Phi_{\theta}(M)).
\label{x4}
\end{equation}
\end{theorem}
\begin{remark}
Using the classical Poincar\'{e} Lemma for the de Rham operator $d$ and a similar argument as above we can also obtain that the cohomology attached to a function $H^\bullet_f(M)$ satisfies a de Rham isomorphism, that is $H^\bullet_f(M)\cong H^\bullet(M;\mathbb{R})$. 
\end{remark}

\subsection{Singular $r$-forms}
Let $S=f^{-1}(\{0\})$. According to \cite{Mo2} a form $\varphi\in\Omega^r(M\setminus S)$ is called a \textit{singular $r$-form} if $f^r\varphi$ can be extended to a smooth $r$-form on whole $M$. We denote the space of singular $r$-forms by $\Omega^r_f(M)$.

If $\varphi\in\Omega^r_f(M)$ is a singular $r$-form then $d\varphi$ is a singular $(r+1)$-form, see \cite{Mo2}. Then $d_\theta\varphi=d\varphi-\theta\wedge\varphi$ is a singular $r+1$-form. In fact we have
\begin{displaymath}
f^{r+1}d_{\theta}\varphi=d_{\theta}(f^{r+1}\varphi)-(r+1)df\wedge f^r\varphi,
\end{displaymath}
so $f^{r+1}d_{\theta}\varphi$ also extend to a smooth form on $M$. Therfore we obtain a chain complex $(\Omega^{\bullet}_f(M),d_{\theta})$ called the Lichnerowicz complex of singular forms. Similar to Proposition 2.4 from \cite{Mo2} we have
\begin{proposition}
The cohomology of $(\Omega^{\bullet}_{f}(M),d_{\theta})$ is isomorphic to $H^{\bullet}_{f,\theta}(M)$.
\end{proposition}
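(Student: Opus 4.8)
The plan is to exhibit an explicit isomorphism of cochain complexes between $(\Omega^{\bullet}(M),d_{f,\theta})$ and $(\Omega^{\bullet}_f(M),d_{\theta})$, from which the isomorphism on cohomology follows at once. The natural candidate is the degree-dependent multiplication
\[
\Phi_r\colon \Omega^r(M)\longrightarrow \Omega^r_f(M),\qquad \Phi_r(\alpha)=\frac{1}{f^r}\,\alpha\Big|_{M\setminus S}.
\]
This makes sense precisely because the power of $f$ is tuned to the degree: multiplying back gives $f^r\cdot(f^{-r}\alpha)=\alpha$, which is smooth on all of $M$, so $\Phi_r(\alpha)$ is indeed a singular $r$-form. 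The exponent $r$ appearing here is exactly what will absorb the degree-dependent term $-r\,df\wedge$ in $d_{f,\theta}$.

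First I would check that each $\Phi_r$ is a linear isomorphism, with inverse $\Psi_r(\varphi)=f^r\varphi$ understood as the smooth extension of $f^r\varphi$ to $M$. Surjectivity is immediate: if $\varphi\in\Omega^r_f(M)$ and $\alpha$ denotes the smooth extension of $f^r\varphi$, then $\alpha=f^r\varphi$ on $M\setminus S$, whence $\Phi_r(\alpha)=\varphi$. For the two compositions $\Phi_r\circ\Psi_r$ and $\Psi_r\circ\Phi_r$ to be the respective identities one uses that a smooth form on $M$ is determined by its restriction to $M\setminus S$; this is the one point requiring care, since it relies on $M\setminus S$ being dense in $M$, equivalently on $S$ having empty interior. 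Under this mild hypothesis on $f$ the extension is unique and $\Phi_r,\Psi_r$ are mutually inverse bijections in each degree.

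Next I would verify that $\Phi$ intertwines the two differentials, i.e. $\Phi_{r+1}\circ d_{f,\theta}=d_{\theta}\circ\Phi_r$. Using the equivalent form $d_{f,\theta}\alpha=f\,d_{\theta}\alpha-r\,df\wedge\alpha$ (Remark 2.1) together with the Leibniz rule $d_{\theta}(g\beta)=dg\wedge\beta+g\,d_{\theta}\beta$ following from \eqref{I4}, a short computation gives
\[
d_{\theta}(\Phi_r\alpha)=d_{\theta}\!\left(\frac{1}{f^r}\alpha\right)=-\frac{r}{f^{r+1}}\,df\wedge\alpha+\frac{1}{f^r}\,d_{\theta}\alpha=\frac{1}{f^{r+1}}\left(f\,d_{\theta}\alpha-r\,df\wedge\alpha\right)=\Phi_{r+1}(d_{f,\theta}\alpha),
\]
so the square commutes on $M\setminus S$. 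This is where the degree-dependent exponent pays off: the term $-\tfrac{r}{f^{r+1}}\,df\wedge\alpha$ produced by differentiating $f^{-r}$ matches precisely the term $-r\,df\wedge\alpha$ built into $d_{f,\theta}$.

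Having a bijective chain map, I would conclude that $\Phi$ induces isomorphisms $H^{\bullet}_{f,\theta}(M)\cong H^{\bullet}(\Omega^{\bullet}_f(M),d_{\theta})$ in every degree, as claimed. The algebraic computations are routine; the only genuine obstacle is the analytic behaviour at $S$, namely ensuring that $\Phi_r$ is well defined and invertible across the zero set of $f$. This is exactly the role played by the definition of singular forms (the factor $f^r$ is chosen so that $\Phi_r(\alpha)$ extends), while the requirement that $S$ have empty interior is what makes the inverse single-valued.
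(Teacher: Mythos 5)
Your proof is correct and is essentially the paper's own argument run in the opposite direction: the paper defines the chain isomorphism $\chi^r(\varphi)=f^r\varphi$ from $(\Omega^{\bullet}_f(M),d_{\theta})$ to $(\Omega^{\bullet}(M),d_{f,\theta})$ via the identity $d_{f,\theta}(f^r\varphi)=f^{r+1}d_{\theta}\varphi$, and your $\Phi_r$ is exactly its inverse, intertwining the differentials by the same computation. The only difference is your explicit observation that invertibility requires $M\setminus S$ to be dense in $M$ (so that smooth extensions are unique), a hypothesis the paper, following Monnier, leaves implicit.
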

\begin{proof}
Define a map of chain complexes $\chi:(\Omega^{\bullet}_f(M),d_{\theta})\rightarrow (\Omega^{\bullet}(M),d_{f,\theta})$ by setting
\begin{displaymath}
\chi^r:\Omega^r_f(M)\rightarrow\Omega^r(M)\,,\,\chi^r(\varphi)=f^r\varphi.
\end{displaymath}
Then, using \eqref{x3}, we obtain 
\begin{equation}
d_{f,\theta}(\chi^r(\varphi))=\chi^{r+1}(d_{\theta}\varphi),
\label{0}
\end{equation}
hence $\chi$ induces an isomorphism between corresponding cohomologies.
\end{proof}

\subsection{Dependence on the function $f$}
As in the case of the cohomology $H^{\bullet}_f(M)$, a natural question to ask about the cohomology $H^{\bullet}_{f,\theta}(M)$ is how it depends on the function $f$. Similar with the Proposition 3.2. from \cite{Mo2},  we explain this fact for our cohomology. In fact we have
\begin{proposition}
If $h\in C^{\infty}(M)$ does not vanish, then cohomologies $H^{\bullet}_{f,\theta}(M)$ and $H^{\bullet}_{fh,\theta}(M)$ are isomorphic.
\end{proposition}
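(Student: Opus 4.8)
The plan is to produce an explicit isomorphism of cochain complexes $\Phi\colon(\Omega^{\bullet}(M),d_{f,\theta})\to(\Omega^{\bullet}(M),d_{fh,\theta})$ rather than to argue abstractly. Guided by the singular-form description of Proposition 2.2, I would take $\Phi$ to be multiplication by a degree-dependent power of $h$, namely
\[
\Phi^r\colon\Omega^r(M)\to\Omega^r(M),\qquad \Phi^r(\varphi)=h^r\varphi.
\]
Since $h$ never vanishes, $h^r$ is a smooth nowhere-zero function, so each $\Phi^r$ is a linear isomorphism with inverse $\varphi\mapsto h^{-r}\varphi$. It then remains only to check that $\Phi$ intertwines the two differentials.

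First I would rewrite the operator in the convenient form of Remark 2.1, $d_{g,\theta}\varphi=g\,d_{\theta}\varphi-(\deg\varphi)\,dg\wedge\varphi$. The chain-map identity to establish is
\[
d_{fh,\theta}(h^r\varphi)=h^{r+1}\,d_{f,\theta}\varphi,\qquad \varphi\in\Omega^r(M),
\]
which is exactly $d_{fh,\theta}\circ\Phi^r=\Phi^{r+1}\circ d_{f,\theta}$. Expanding the left-hand side, one uses $d_{\theta}(h^r\varphi)=r\,h^{r-1}dh\wedge\varphi+h^r\,d_{\theta}\varphi$ together with the Leibniz rule $d(fh)=h\,df+f\,dh$. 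The key point is that the two terms proportional to $dh\wedge\varphi$ cancel: the term $r\,fh^{r}\,dh\wedge\varphi$ produced by differentiating $h^r$ is killed by the corresponding term in $r\,d(fh)\wedge h^r\varphi$, leaving precisely $h^{r+1}\bigl(f\,d_{\theta}\varphi-r\,df\wedge\varphi\bigr)=h^{r+1}d_{f,\theta}\varphi$, as required.

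I expect the only delicate point to be bookkeeping the degree: the power of $h$ must track $\deg\varphi$ exactly, since it is precisely this degree-dependence that produces the cancelling $dh$-terms, whereas a single global factor would fail. Conceptually there is no obstacle, because the correct map is dictated by Proposition 2.2. Indeed, an alternative and even shorter route is to observe that, $h$ being nowhere zero, $f$ and $fh$ share the same zero set $S=f^{-1}(\{0\})$, and for $\varphi\in\Omega^r(M\setminus S)$ the form $(fh)^r\varphi=h^r\,(f^r\varphi)$ extends smoothly across $S$ if and only if $f^r\varphi$ does. Hence the singular-form spaces coincide, $\Omega^r_{fh}(M)=\Omega^r_f(M)$, and with the common differential $d_{\theta}$ the two Lichnerowicz complexes of singular forms are literally identical. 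Applying Proposition 2.2 to both $f$ and $fh$ then yields
\[
H^{\bullet}_{f,\theta}(M)\cong H^{\bullet}\bigl(\Omega^{\bullet}_f(M),d_{\theta}\bigr)=H^{\bullet}\bigl(\Omega^{\bullet}_{fh}(M),d_{\theta}\bigr)\cong H^{\bullet}_{fh,\theta}(M),
\]
which is the asserted isomorphism.
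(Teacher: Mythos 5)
Your primary argument is essentially the paper's own proof: the paper uses the chain isomorphism $\Phi^r(\varphi)=\varphi/h^r$ from $(\Omega^{\bullet}(M),d_{fh,\theta})$ to $(\Omega^{\bullet}(M),d_{f,\theta})$, which is exactly the inverse of your map $\varphi\mapsto h^r\varphi$, and your intertwining identity $d_{fh,\theta}(h^r\varphi)=h^{r+1}d_{f,\theta}\varphi$ is equivalent to the paper's $\Phi^{r+1}(d_{fh,\theta}\varphi)=d_{f,\theta}(\Phi^r(\varphi))$. The only cosmetic difference is that you verify the cancellation of the $dh$-terms by hand (your computation is correct), whereas the paper outsources that step to Monnier's relation $\Phi^{r+1}(d_{fh}\varphi)=d_{f}(\Phi^r(\varphi))$ and only treats the extra term $-fh\theta\wedge\varphi$. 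Your alternative route, however, is genuinely different and worth recording: since $h$ is nowhere zero, $f$ and $fh$ have the same zero set $S$, and $(fh)^r\varphi=h^r(f^r\varphi)$ extends smoothly across $S$ if and only if $f^r\varphi$ does, so $\Omega^{\bullet}_{fh}(M)=\Omega^{\bullet}_{f}(M)$ as complexes with the common differential $d_{\theta}$; two applications of Proposition 2.2 then finish the proof with no further computation. What this buys is economy and conceptual clarity: the degree-dependent power trick is invoked only once (inside Proposition 2.2, via $d_{f,\theta}(f^r\varphi)=f^{r+1}d_{\theta}\varphi$), and it becomes transparent that $H^{\bullet}_{f,\theta}(M)$ depends on $f$ only through the complex of singular forms --- the same mechanism that underlies Corollary 2.3. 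The direct chain-map computation, on the other hand, is self-contained and does not rely on the identification with singular forms.
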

\begin{proof}
For each $r\in\mathbb{N}$, consider the linear isomorphism
\begin{equation}
\Phi^r:\Omega^{r}(M)\rightarrow\Omega^{r}(M)\,\,,\,\,\Phi^r(\varphi)=\frac{\varphi}{h^r}.
\label{II3}
\end{equation} 
If $\varphi\in\Omega^{r}(M)$, one checks easily that
\begin{equation}
\Phi^{r+1}(d_{fh,\theta}\varphi)=d_{f,\theta}(\Phi^r(\varphi)).
\label{II4}
\end{equation}
Indeed, we have 
\begin{eqnarray*}
\Phi^{r+1}(d_{fh,\theta}\varphi) &=& \Phi^{r+1}(d_{fh}\varphi-fh\theta\wedge\varphi)\\
&=& \Phi^{r+1}(d_{fh}\varphi)-\Phi^{r+1}(fh\theta\wedge\varphi)\\
&=& d_f(\Phi^r(\varphi))-f\theta\wedge\Phi^r(\varphi)\\
&=& d_{f,\theta}(\Phi^r(\varphi)),
\end{eqnarray*}
where we have used the relation $\Phi^{r+1}(d_{fh}\varphi)=d_{f}(\Phi^r(\varphi))$ from \cite{Mo2}.

Thus $\Phi$ induces an isomorphism between cohomologies $H^{\bullet}_{f,\theta}(M)$ and $H^{\bullet}_{fh,\theta}(M)$.
\end{proof}

\begin{corollary}
\label{depf}
If the function $f$ does not vanish, then $H^{\bullet}_{f,\theta}(M)$ is isomorphic to the Lichnerowicz cohomology $H^{\bullet}_{\theta}(M)$.
\end{corollary}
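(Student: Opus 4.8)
The plan is to reduce the statement to Proposition 2.3 by taking the non-vanishing factor to be $f$ itself. First I would recall from part (ii) of Proposition 2.1 that $d_{1,\theta}=d_{\theta}$, so that the Lichnerowicz type complex attached to the constant function $1$ coincides term by term with the Lichnerowicz complex $(\Omega^{\bullet}(M),d_{\theta})$; consequently $H^{\bullet}_{1,\theta}(M)=H^{\bullet}_{\theta}(M)$ by definition.

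Next, since $f$ does not vanish, it is an admissible choice for the non-vanishing function $h$ in Proposition 2.3. Applying that proposition with the base function equal to the constant $1$ and multiplier $h=f$ gives an isomorphism $H^{\bullet}_{1,\theta}(M)\cong H^{\bullet}_{1\cdot f,\theta}(M)=H^{\bullet}_{f,\theta}(M)$. Composing this with the identification of the previous paragraph yields $H^{\bullet}_{f,\theta}(M)\cong H^{\bullet}_{\theta}(M)$, which is the claim.

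The only point to verify is that the hypotheses of Proposition 2.3 are met, i.e. that $h=f$ is nowhere zero, which is exactly the standing assumption. Thus there is no genuine obstacle here: the content is entirely carried by Propositions 2.1 and 2.3, and the corollary is a formal consequence. If one preferred a self-contained argument, one could instead exhibit the explicit cochain isomorphism $\Phi^{r}(\varphi)=\varphi/f^{r}$, analogous to \eqref{II3}, and verify directly the intertwining relation \eqref{II4} specialized to $h=f$ and base function $1$, namely $\Phi^{r+1}\circ d_{f,\theta}=d_{\theta}\circ\Phi^{r}$; but invoking Proposition 2.3 with the constant function is the shortest route.
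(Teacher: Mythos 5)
Your proposal is correct and is essentially the paper's own argument: the paper also reduces the corollary to Proposition 2.3, taking the base function $f$ and multiplier $h=\tfrac{1}{f}$ so that $fh=1$ and $H^{\bullet}_{1,\theta}(M)=H^{\bullet}_{\theta}(M)$, whereas you apply the same proposition with base function $1$ and multiplier $h=f$. The two substitutions produce mutually inverse chain maps ($\varphi\mapsto f^{r}\varphi$ versus $\varphi\mapsto\varphi/f^{r}$), so the difference is purely one of direction, not of method.
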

\begin{proof}
We take $h=\frac{1}{f}$ in the above proposition.
\end{proof}
We also have
\begin{corollary}
If $f$ and $g$ are smooth functions on $M$ such that $S=f^{-1}(0)=g^{-1}(0)$ and $f=g$ on some neighborhood of $S$, then $H^{\bullet}_{f,\theta}(M)\cong H^{\bullet}_{g,\theta}(M)$.
\end{corollary}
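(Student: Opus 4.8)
The plan is to construct an explicit isomorphism between the complexes $(\Omega^\bullet(M), d_{f,\theta})$ and $(\Omega^\bullet(M), d_{g,\theta})$ by exploiting the fact that $f$ and $g$ differ only away from $S$, where both are nonvanishing. First I would observe that the ratio $h = g/f$ is a well-defined smooth function on $M \setminus S$; since $f = g$ on a neighborhood $V$ of $S$, we have $h \equiv 1$ on $V$, so $h$ extends smoothly to all of $M$ by setting $h = 1$ on $S$. Moreover, $h$ is nonvanishing everywhere: on $M \setminus S$ both $f$ and $g$ are nonzero (as $S$ is their common zero set), and on $V \supset S$ we have $h = 1$. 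Thus $g = fh$ with $h \in C^\infty(M)$ nowhere zero.

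With this observation in hand, the result becomes an immediate application of Proposition 2.3. That proposition asserts precisely that for a nowhere-vanishing $h \in C^\infty(M)$, the cohomologies $H^\bullet_{f,\theta}(M)$ and $H^\bullet_{fh,\theta}(M)$ are isomorphic via the map $\Phi^r(\varphi) = \varphi/h^r$. Since we have arranged $fh = g$, this yields $H^\bullet_{f,\theta}(M) \cong H^\bullet_{fh,\theta}(M) = H^\bullet_{g,\theta}(M)$ directly.

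The only genuinely nontrivial step is verifying that $h = g/f$ is both smooth and nonvanishing on all of $M$, and the key to this is the hypothesis $f = g$ near $S$. Away from $S$ the quotient is smooth because the denominator does not vanish there; the delicate point is behavior at and near the zero set $S$, and this is resolved precisely because $h$ is identically $1$ on the neighborhood where $f = g$, so no singularity or ambiguity arises. I expect this smoothness-and-extension argument to be the main (and essentially the only) obstacle, after which the conclusion follows formally from Proposition 2.3 with no further computation.
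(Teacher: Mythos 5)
Your proposal is correct and is exactly the argument the paper intends: the corollary is stated immediately after Proposition 2.3 precisely so that it follows by writing $g = fh$ with $h = g/f$ extended by $1$ across the neighborhood where $f = g$, which is smooth and nowhere vanishing for the reasons you give. The verification that $h$ glues smoothly (smooth on $M\setminus S$, identically $1$ on an open neighborhood of $S$, consistent on the overlap) and is nonvanishing (since $g^{-1}(0)=S$) is the whole content, and you have it right.
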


\subsection{Dependence on the class of $\theta$}
Another natural question to ask about the cohomology $H^{\bullet}_{f,\theta}(M)$ is if it depends on the class of $\theta$ as in the case of Lichnerowicz cohomology $H^{\bullet}_{\theta}(M)$. We have
\begin{proposition}
\label{deptheta}
The Lichnerowicz type cohomology attached to a function $f$ depends only on the class of $\theta$. In fact, we have the  isomorphism 
\begin{displaymath}
H^{\bullet}_{f,\theta-d\sigma}(M)\cong H^{\bullet}_{f,\theta}(M).
\end{displaymath}
\end{proposition}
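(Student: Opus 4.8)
The plan is to exhibit an explicit isomorphism of cochain complexes, modelled on the classical Lichnerowicz situation where multiplication by $e^{\sigma}$ converts $d_{\theta-d\sigma}$ into $d_{\theta}$. Concretely, for each $r$ I would introduce the linear isomorphism
\[
\Psi^{r}:\Omega^{r}(M)\to\Omega^{r}(M),\qquad \Psi^{r}(\varphi)=e^{\sigma}\varphi,
\]
whose inverse is multiplication by $e^{-\sigma}$, and then show that it is a chain map, i.e.\ that
\[
d_{f,\theta}(e^{\sigma}\varphi)=e^{\sigma}\,d_{f,\theta-d\sigma}\varphi,\qquad \varphi\in\Omega^{r}(M).
\]
Granting this intertwining identity, $\Psi=(\Psi^{r})_{r}$ is an isomorphism of complexes from $(\Omega^{\bullet}(M),d_{f,\theta-d\sigma})$ to $(\Omega^{\bullet}(M),d_{f,\theta})$, hence induces the desired isomorphism $H^{\bullet}_{f,\theta-d\sigma}(M)\approx H^{\bullet}_{f,\theta}(M)$ in cohomology.

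The verification of the intertwining identity is where the work lies, and the cleanest route is to factor through the ordinary Lichnerowicz operator using Remark 2.1. First I would record the known scalar-multiplier fact for $d_{\theta}$, namely $d_{\theta}(e^{\sigma}\varphi)=e^{\sigma}d_{\theta-d\sigma}\varphi$, which follows from $d(e^{\sigma}\varphi)=e^{\sigma}(d\sigma\wedge\varphi+d\varphi)$ together with $d_{\theta}=d-\theta\wedge$. Then, writing $d_{f,\theta}\varphi=f\,d_{\theta}\varphi-r\,df\wedge\varphi$ via Remark 2.1 and noting that $e^{\sigma}\varphi$ still has degree $r$, I would compute
\[
d_{f,\theta}(e^{\sigma}\varphi)=f\,d_{\theta}(e^{\sigma}\varphi)-r\,df\wedge e^{\sigma}\varphi
=e^{\sigma}\bigl(f\,d_{\theta-d\sigma}\varphi-r\,df\wedge\varphi\bigr)=e^{\sigma}\,d_{f,\theta-d\sigma}\varphi,
\]
which is exactly the required relation. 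The essential point is that the multiplier $e^{\sigma}$ pulls out of both the $f\,d_{\theta}$ term and the $r\,df\wedge$ term without disturbing the degree, so the function-attached modification $-r\,df\wedge$ is transported unchanged.

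I expect no serious obstacle here: once the problem is reduced through Remark 2.1, the argument is essentially the classical Lichnerowicz computation with a harmless extra term, and the only point deserving care is confirming that $e^{\sigma}$ commutes past $df\wedge$ (it does, being a scalar function) and that $\sigma$ itself is a well-defined global smooth function on $M$, which is guaranteed since $d\sigma$ is prescribed as an exact one-form. Finally I would remark that, since $\theta$ and $\theta-d\sigma$ differ by an exact form, this shows the complex, and hence the cohomology $H^{\bullet}_{f,\theta}(M)$, depends only on the de Rham class $[\theta]\in H^{1}_{dR}(M)$, completing the proof.
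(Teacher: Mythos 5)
Your proposal is correct and is essentially the paper's own proof: the paper also uses the map $[\varphi]\mapsto[e^{\sigma}\varphi]$ and the intertwining identity $d_{f,\theta}(e^{\sigma}\varphi)=e^{\sigma}d_{f,\theta-d\sigma}\varphi$, which it states "by direct calculus." Your verification of that identity via Remark 2.1 (using that $e^{\sigma}$ has degree zero, so the $-r\,df\wedge$ term is unaffected) simply fills in the calculus the paper leaves to the reader.
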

\begin{proof}
By direct calculus we easily obtain $d_{f,\theta}(e^{\sigma}\varphi)=e^{\sigma}d_{f,\theta-d\sigma}\varphi$, where $\sigma$ is a smooth function and thus the map $[\varphi]\mapsto[e^{\sigma}\varphi]$ establishes an isomorphism between cohomologies  $H^{\bullet}_{f,\theta-d\sigma}(M)$ and $H^{\bullet}_{f,\theta}(M)$.
\end{proof}
\begin{remark}
The above result says that locally, the Lichnerowicz type cohomology attached to a function $f$ is isomorphic with the cohomology attached to a function after a change $\varphi\mapsto e^{-\sigma}\varphi$ with $\sigma$ a smooth function which satisfies $d\sigma=\theta$, that is $d_{f,\theta}$ is the unique differential in $\Omega^{\bullet}(M)$ which makes the multiplication by the smooth function $e^{-\sigma}$ an isomorphism of cochain complexes $e^{-\sigma}:(\Omega^{\bullet}(M),d_{f,\theta})\rightarrow(\Omega^{\bullet}(M),d_f)$. Consequently, we have $H^{\bullet}_{f,\theta}(M)\cong H^{\bullet}_{f}(M)$. 
\end{remark}
\begin{example}
Let $(M,\omega,\theta)$ be a locally conformal symplectic manifold, that is $\theta\in\Omega^1(M)$ is $d$-closed and $\omega\in\Omega^2(M)$ is $d_\theta$-closed, i.e. $d_\theta\omega=d\omega-\theta\wedge\omega=0$. Therefore, such a manifold admits two cohomological invariants: $[\theta]\in H^1_{dR}(M)$ and $[\omega]\in H^2_\theta(M)$. On the other hand, for  $f\in C^\infty(M)$, we have $d_{f}(f\theta)=0$ and $d_{f,\theta}(f^2\omega)=0$, which says that a locally conformal symplectic manifold $(M,\omega,\theta)$ admits another two cohomological invariants
\begin{equation}
\label{o1}
[f\theta]\in H^1_f(M)\,\,{\rm and}\,\,[f^2\omega]\in H^2_{f,\theta}(M).
\end{equation}
Moreover, if we consider a conformal equivalent locally conformal symplectic structure on $M$ defined by $\theta^\prime=\theta-df^2/f^2$ and $\omega^\prime=(1/f^2)\omega$, then we have $0=d_{f,\theta^\prime}(f^2\omega^\prime)=d_{f,\theta^\prime}\omega$. Thus, we have 
\begin{displaymath}
[\omega]\in H^2_{f,\theta^\prime}(M)\cong H^2_{f,\theta}(M)\cong H^2_\theta(M),
\end{displaymath}
where the first isomorphism is given by Proposition \ref{deptheta} and the second one follows by Corollary \ref{depf}.
\end{example}

\subsection{Relative cohomology}

The relative de Rham cohomology with respect to a smooth map between two manifolds was first defined in \cite{B-T} p. 78. Also, a relative vertical cohomology of foliated manifolds can be found for instance in \cite{Te}. In \cite{Mo2} is given a relative cohomology for $H^{\bullet}_f(M)$ and in \cite{I} is studied a relative cohomology for $H^{\bullet}_{\theta}(M)$. In this subsection we construct a similar version for our combined cohomology $H^{\bullet}_{f,\theta}(M)$.

Let $\mu:M\rightarrow M^{'}$ be a smooth map between two smooth manifolds. Taking into acount the standard relation  $d\mu^*=\mu^*d^{'}$, (here $d^{'}$ denotes the exterior derivative on $M^{'}$), we obtain
\begin{equation}
d_{\mu^*f}\mu^*=\mu^*d_{f}^{'}\,,\,f\in C^{\infty}(M^{'})\,,\,\mu^*f=f\circ \mu\in C^\infty(M).
\label{II5}
\end{equation}
Indeed, for $\varphi\in\Omega^{r}(M^{'})$, we have
\begin{eqnarray*}
d_{\mu^*f}(\mu^*\varphi) &=&\mu^*f d(\mu^*\varphi)-rd(\mu^*f)\wedge\mu^*\varphi\\
&=&\mu^*f\mu^*(d^{'}\varphi)-r\mu^*(d^{'}f)\wedge\mu^*\varphi\\
&=&\mu^*(fd^{'}\varphi)-\mu^*(rd^{'}f\wedge\varphi)\\
&=& \mu^*(d_{f}^{'}\varphi).
\end{eqnarray*}
The relation \eqref{II5} says that we have the homomorphism
\begin{displaymath}
\mu^*:H^{\bullet}_f(M^{'})\rightarrow H^{\bullet}_{\mu^*f}(M)\,,\,\mu^*[\varphi]=[\mu^*\varphi].
\end{displaymath}
Now, taking into account \eqref{II5} we obtain
\begin{equation}
d_{\mu^*f,\mu^*\theta}\mu^*=\mu^*d^{'}_{f,\theta}
\label{II6}
\end{equation}
for any smooth function $f\in C^{\infty}({M}^{'})$ and for any closed  $1$-form $\theta\in\Omega^1(M^{'})$. Indeed, for $\varphi\in\Omega^{r}(M^{'})$, we have
\begin{eqnarray*}
d_{\mu^*f,\mu^*\theta}(\mu^*\varphi) &=&d_{\mu^*f}(\mu^*\varphi)-\mu^*f\mu^*\theta\wedge\mu^*\varphi\\
&=&\mu^*d^{'}_{f}\varphi-\mu^*(f\theta\wedge\varphi)\\
&=& \mu^*(d_{f,\theta}^{'}\varphi).
\end{eqnarray*}
The relation \eqref{II6} says that we have the homomorphism
\begin{equation}
\mu^*:H^{\bullet}_{f,\theta}(M^{'})\rightarrow H^{\bullet}_{\mu^*f,\mu^*\theta}(M)\,,\,\mu^*[\varphi]=[\mu^*\varphi].
\label{1}
\end{equation}
\begin{remark}
If $\mu$ is a diffeomorphism then $H^{\bullet}_{f,\theta}(M^{'})\cong H^{\bullet}_{\mu^*f,\mu^*\theta}(M)$.
\end{remark}
Now, we define the differential complex $\left(\Omega^\bullet(\mu),\widetilde{d}_{f,\theta}\right)$, where
\begin{displaymath}
\Omega^{r}(\mu)=\Omega^{r}(M^{'})\oplus\Omega^{r-1}(M),\,\,{\rm and}\,\,\,\widetilde{d}_{f,\theta}(\varphi, \psi)=(-d_{f,\theta}^{'}\varphi, \mu^*\varphi+d_{\mu^*f,\mu^*\theta}\psi).
\end{displaymath}
Taking into account $d_{f,\theta}^{'2}=d_{\mu^*f,\mu^*\theta}^2=0$ and ({\ref{II6}}) we easily verify that $\widetilde{d}_{f,\theta}^2=0$. Denote the cohomology groups of this complex by $H^{\bullet}_{f,\theta}(\mu)$. They are called the \textit{Lichnerowicz cohomology groups attached to the function $f$ relative to the smooth map $\mu$}. 
\begin{example}
To obtain an example of such cohomological invariant, we consider a locally conformal K\"{a}hler manifold $(M,\omega,\theta)$ and  its K\"{a}hler covering map $\pi:(\widetilde{M},\widetilde{\omega})\rightarrow (M,\omega,\theta)$ for which, the pullback of $\theta$ is exact, that is $\pi^*\theta=d\sigma$ for some $\sigma\in C^{\infty}(\widetilde{M})$. According to \cite{O-V1, O-V2}, we have 
\begin{equation}
\pi^*\omega=d_{\pi^*\theta}(d^c\sigma),
\label{III6}
\end{equation}
for the real operator $d^c=i(\overline{\partial}-\partial)$, where $d=\partial+\overline{\partial}$ is the decomposition of the exterior derivative on the K\"{a}hler manifold $(\widetilde{M},\widetilde{\omega})$. Now, if we consider the relative Lichnerowicz cohomology attached to a function $f\in C^\infty(M)$ associated to the covering map $\pi:(\widetilde{M},\widetilde{\omega})\rightarrow (M,\omega,\theta)$, by direct calculus, we have  
\begin{displaymath}
\widetilde{d}_{f,\theta}(f^2\omega,-\pi^*f\cdot d^c\sigma)=\left(-d_{f,\theta}(f^2\omega),\pi^*(f^2\omega)-d_{\pi^*f,\pi^*\theta}(\pi^*f\cdot d^c\sigma)\right)=(0,0).
\end{displaymath}
Therefore, $(f^2\omega,-\pi^* f\cdot d^c\sigma)$ defines a cohomology class in  $H^2_{f,\theta}(\pi)$, which is called the \textit{Lichnerowicz class attached to $f$ relativ to the covering map of a lcK manifold}.
\end{example}
Now, if we regraduate the complex $\Omega^{r}(M)$ as $\widetilde{\Omega}^{r}(M)=\Omega^{r-1}(M)$, then we obtain an exact sequence of
differential complexes
\begin{equation}
0\longrightarrow(\widetilde{\Omega}^{r}(M),d_{\mu^*f,\mu^*\theta})\stackrel{\alpha}{\longrightarrow}(\Omega^{r}(\mu),\widetilde{d}_{f,\theta})\stackrel{\beta}{\longrightarrow}(\Omega^{r}(M^{'}),d^{'}_{f,\theta})\longrightarrow0
\label{II7}
\end{equation}
with the obvious mappings $\alpha$ and $\beta$ given by $\alpha(\psi)=(0, \psi)$ and $\beta(\varphi, \psi)=\varphi$, respectively. From ({\ref{II7}})
we have an exact sequence in cohomologies 
\begin{displaymath}
\ldots\longrightarrow H^{r-1}_{\mu^*f,\mu^*\theta}(M)\stackrel{\alpha^*}{\longrightarrow}H^{r}_{f,\theta}(\mu)\stackrel{\beta^*}{\longrightarrow}H^{r}_{f,\theta}(M^{'})\stackrel{\delta^*}{\longrightarrow}H^{r}_{\mu^*f,\mu^*\theta}(M)\longrightarrow\ldots .
\end{displaymath}
It is easily seen that $\delta^*=\mu^*$. Here $\mu^*$ denotes the corresponding map between cohomology groups. Let $\varphi\in\Omega^{r}(M^{'})$ be a $d^{'}_{f,\theta}$-closed form, and $(\varphi, \psi)\in \Omega^{r}(\mu)$. Then $\widetilde{d}_{f,\theta}(\varphi, \psi)=(0, \mu^*\varphi+d_{\mu^*f,\mu^*\theta}\psi)$ and by the definition of the operator $\delta^*$ we have
\begin{displaymath}
\delta^*[\varphi]=[\mu^*\varphi+d_{\mu^*f,\mu^*\theta}\psi]=[\mu^*\varphi]=\mu^*[\varphi].
\end{displaymath}
Hence, we get a long exact sequence
\begin{equation}
\ldots\longrightarrow H^{r-1}_{\mu^*f,\mu^*\theta}(M)\stackrel{\alpha^*}{\longrightarrow}H^{r}_{f,\theta}(\mu)\stackrel{\beta^*}{\longrightarrow}H^{r}_{f,\theta}(M^{'})\stackrel{\mu^*}{\longrightarrow}H^{r}_{\mu^*f,\mu^*\theta}(M)\longrightarrow\ldots ,
\label{II8}
\end{equation}
which implies
\begin{proposition}
If the  manifolds $M$ and $M^{'}$ are of the $n$-th and $n^{'}$-th dimension, respectively, then
\begin{enumerate}
\item[(i)] $\beta^*:H^{n+1}_{f,\theta}(\mu)\rightarrow H^{n+1}_{f,\theta}(M^{'})$ is an epimorphism,
\item[(ii)] $\alpha^*:H^{n^{'}}_{\mu^*f,\mu^*\theta}(M)\rightarrow H^{n^{'}+1}_{f,\theta}(\mu)$  is an epimorphism, 
\item[(iii)] $\beta^*:H^{r}_{f,\theta}(\mu)\rightarrow H^{r}_{f,\theta}(M^{'})$ is an isomorphism for $r>n+1$,
\item[(iv)] $\alpha^*:H^{r}_{\mu^*f,\mu^*\theta}(M)\rightarrow H^{r+1}_{f,\theta}(\mu)$ is an isomorphism for $r>n^{'}$,
\item[(v)] $H^{r}_{f,\theta}(\mu)=0$ for $r> {\rm max}\{n+1,n^{'}\}$.
\end{enumerate}
\end{proposition}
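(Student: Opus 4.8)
The plan is to read off all five statements from the long exact sequence \eqref{II8} by a pure dimension count. The only substantive input is that cohomology vanishes above the dimension of the underlying manifold: since $\Omega^k(M')=0$ for $k>n'$ and $\Omega^k(M)=0$ for $k>n$, one has
\[
H^k_{f,\theta}(M')=0 \quad (k>n'), \qquad H^k_{\mu^*f,\mu^*\theta}(M)=0 \quad (k>n).
\]
Granting this, each assertion is exactness of \eqref{II8} combined with the elementary remark that, in an exact sequence, a homomorphism whose target vanishes is surjective while one whose source vanishes is injective.

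For (i), the relevant three-term piece of \eqref{II8} is
\[
H^{n+1}_{f,\theta}(\mu)\stackrel{\beta^*}{\longrightarrow}H^{n+1}_{f,\theta}(M')\stackrel{\mu^*}{\longrightarrow}H^{n+1}_{\mu^*f,\mu^*\theta}(M).
\]
The last group vanishes because $n+1>n$, so $\operatorname{Im}\beta^*=\ker\mu^*$ is all of $H^{n+1}_{f,\theta}(M')$ and $\beta^*$ is onto. For (ii) I would instead use the piece
\[
H^{n'}_{\mu^*f,\mu^*\theta}(M)\stackrel{\alpha^*}{\longrightarrow}H^{n'+1}_{f,\theta}(\mu)\stackrel{\beta^*}{\longrightarrow}H^{n'+1}_{f,\theta}(M');
\]
here the last group vanishes since $n'+1>n'$, so $\operatorname{Im}\alpha^*=\ker\beta^*$ is everything and $\alpha^*$ is onto.

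For (iii) and (iv) two neighbouring groups vanish, which forces the map in question to be both injective and surjective. In (iii), for $r>n+1$ both $H^{r-1}_{\mu^*f,\mu^*\theta}(M)$ and $H^{r}_{\mu^*f,\mu^*\theta}(M)$ vanish (as $r-1>n$ and $r>n$); the four-term piece of \eqref{II8} with maps $\alpha^*,\beta^*,\mu^*$ around $H^r_{f,\theta}(\mu)$ then gives $\ker\beta^*=\operatorname{Im}\alpha^*=0$ and $\operatorname{Im}\beta^*=\ker\mu^*=H^r_{f,\theta}(M')$, so $\beta^*$ is an isomorphism. Symmetrically, in (iv), for $r>n'$ both $H^{r}_{f,\theta}(M')$ and $H^{r+1}_{f,\theta}(M')$ vanish, and the analogous four-term piece with maps $\mu^*,\alpha^*,\beta^*$ around $H^r_{\mu^*f,\mu^*\theta}(M)$ forces $\alpha^*$ to be an isomorphism. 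Finally (v) follows at once from (iii): if $r>\max\{n+1,n'\}$ then $\beta^*\colon H^r_{f,\theta}(\mu)\to H^r_{f,\theta}(M')$ is an isomorphism onto a group that vanishes (since $r>n'$); one can equally see it at cochain level, as $\Omega^r(\mu)=\Omega^r(M')\oplus\Omega^{r-1}(M)=0$ when $r>n'$ and $r-1>n$.

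There is no real obstacle here; the argument is a routine diagram chase. The one point needing care is the degree shift built into $\alpha^*$ coming from the regraduation $\widetilde{\Omega}^r(M)=\Omega^{r-1}(M)$, so that in each case the vanishing ranges $k>n$ and $k>n'$ are matched against the correct groups of \eqref{II8}; in particular $\alpha^*$ raises degree by one when comparing $H^\bullet_{\mu^*f,\mu^*\theta}(M)$ with $H^\bullet_{f,\theta}(\mu)$.
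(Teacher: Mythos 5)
Your proof is correct and is exactly the argument the paper intends: the paper states this proposition without proof, as an immediate consequence of the long exact sequence \eqref{II8} it has just constructed, combined with the vanishing $\Omega^k(M')=0$ for $k>n'$ and $\Omega^k(M)=0$ for $k>n$. Your diagram chase, including the careful handling of the degree shift in $\alpha^*$ from the regraduation $\widetilde{\Omega}^r(M)=\Omega^{r-1}(M)$, fills in precisely what the author left implicit.
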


\subsection{A Mayer-Vietoris sequence}

Since the differentials $d_{f,\theta}$ commutes with the restrictions to open subsets, one can construct  a Mayer-Vietoris exact sequence, in the same way as for the de Rham cohomology, see \cite{B-T, Va2} (or Lichnerowicz cohomology see \cite{H-R}, or cohomology attached to a function see \cite{Mo2}), namely:

Suppose $M$ is the union of two open subsets $U, V$ . Then the following is a short
exact sequence of cochain complexes
\begin{displaymath}
0\rightarrow(\Omega^{\bullet}(M),d_{f,\theta})\stackrel{\alpha}{\rightarrow}(\Omega^{\bullet}(U)\oplus\Omega^{\bullet}(V),d_{f|_U,\theta|_U}\oplus d_{f|_V,\theta|_V})\stackrel{\beta}{\rightarrow}
\end{displaymath}
\begin{displaymath}
\stackrel{\beta}{\rightarrow}(\Omega^{\bullet}(U\cap V),d_{f|_{U\cap V},\theta_{U\cap V}})\rightarrow 0
\end{displaymath}
where $\alpha(\varphi)=(\varphi|_U,\varphi|_V)$ and $\beta(\varphi,\psi)=\varphi|_{U\cap V}-\psi|_{U\cap V}$. So we obtain the following Mayer-Vietoris sequence:
\begin{theorem}
If $\mathcal{U}=\{U, V\}$ is an open cover of $M$, we have the long exact sequence
\begin{equation}
\ldots\rightarrow H^{r}_{f,\theta}(M)\stackrel{\alpha_*}{\rightarrow} H^{r}_{f|_U,\theta|_U}(U)\oplus H^r_{f|_V,\theta|_V}(V)\stackrel{\beta_*}{\rightarrow}
\label{2}
\end{equation} 
\begin{displaymath}
\stackrel{\beta_*}{\rightarrow}H^{r}_{f|_{U\cap V},\theta|_{U\cap V}}(U\cap V)\stackrel{\delta}{\rightarrow}H^{r+1}_{f,\theta}(M)\rightarrow\ldots
\end{displaymath}
where 
$\alpha_*([\varphi])=([\varphi_U],[\varphi_V])\,,\,\beta_*([\varphi],[\psi])=[\varphi|_{U\cap V}-\psi|_{U\cap V}]$, $\delta([\sigma])=[d_{f}\lambda|_V\wedge\sigma]=-[d_{f}\lambda|_U\wedge\sigma]$. Here $\{\lambda_U,\lambda_V\}$ is a partition of unity
subordinate to $\{U,V\}$ and the forms under consideration are assumed to be extended by $0$ to the whole $M$.
\end{theorem}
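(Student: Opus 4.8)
The plan is to obtain the long exact sequence \eqref{2} from the short exact sequence of cochain complexes displayed immediately above the statement, by the standard zig-zag (snake) lemma of homological algebra; the only genuinely new point is the explicit identification of the connecting map $\delta$, where the derivation property (iii) of Proposition 2.1 enters. First I would confirm that the displayed sequence is indeed a short exact sequence of cochain complexes. The maps $\alpha$ and $\beta$ commute with the differentials because $d_{f,\theta}$ commutes with restriction to open sets, so that $(d_{f,\theta}\varphi)|_U=d_{f|_U,\theta|_U}(\varphi|_U)$, and similarly over $V$ and $U\cap V$. Injectivity of $\alpha$ is immediate, since $\varphi|_U=\varphi|_V=0$ forces $\varphi=0$ on $M=U\cup V$; and exactness at the middle follows by gluing, as $\beta\circ\alpha=0$ is clear while $\varphi|_{U\cap V}=\psi|_{U\cap V}$ lets $\varphi$ and $\psi$ patch to a global form $\chi$ with $\alpha(\chi)=(\varphi,\psi)$.

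The one step that is not formal is the surjectivity of $\beta$, which uses the partition of unity $\{\lambda_U,\lambda_V\}$ subordinate to $\{U,V\}$. Given $\sigma\in\Omega^r(U\cap V)$, I would put $\varphi=\lambda_V\sigma$ on $U$ and $\psi=-\lambda_U\sigma$ on $V$, each extended by zero off the support of the corresponding bump function; these extensions are smooth because $\mathrm{supp}\,\lambda_V\subset V$ and $\mathrm{supp}\,\lambda_U\subset U$, so that $\beta(\varphi,\psi)=(\lambda_V+\lambda_U)\sigma=\sigma$.

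Once exactness is established, the zig-zag lemma yields the long exact sequence \eqref{2}, and it remains to read off $\delta$. Starting from a $d_{f,\theta}$-closed form $\sigma$ on $U\cap V$, I lift it along $\beta$ to $(\lambda_V\sigma,-\lambda_U\sigma)$ and apply the differential componentwise. Using property (iii) of Proposition 2.1 with the $0$-form $\lambda_V$, together with $d_{f,\theta}\sigma=0$, gives $d_{f,\theta}(\lambda_V\sigma)=d_f\lambda_V\wedge\sigma$, and likewise $d_{f,\theta}(-\lambda_U\sigma)=-d_f\lambda_U\wedge\sigma$. On $U\cap V$ these two forms coincide, because $d\lambda_U+d\lambda_V=d(1)=0$ implies $d_f\lambda_V\wedge\sigma=-d_f\lambda_U\wedge\sigma$ there; hence they glue to a single global $d_{f,\theta}$-closed form whose class is $\delta([\sigma])$, giving exactly $\delta([\sigma])=[d_f\lambda_V\wedge\sigma]=-[d_f\lambda_U\wedge\sigma]$.

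I expect the main obstacle to be this last identification rather than the exactness itself: one must observe that, although the complex is built from $d_{f,\theta}$, the Leibniz rule (iii) converts $d_{f,\theta}$ into $d_f$ when one differentiates the product of the $0$-form $\lambda$ with $\sigma$, since the $\theta$-term contributes only $\lambda_V\,d_{f,\theta}\sigma=0$. This is precisely why $d_f$, and not $d_{f,\theta}$, appears in the stated formula for $\delta$. Checking that $\delta$ is well defined independently of the chosen partition of unity and of the representative $\sigma$ is then the usual diagram chase supplied by the snake lemma.
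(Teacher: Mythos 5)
Your proposal is correct and follows essentially the same route as the paper, which derives the long exact sequence from the displayed short exact sequence of cochain complexes $0\rightarrow\Omega^{\bullet}(M)\rightarrow\Omega^{\bullet}(U)\oplus\Omega^{\bullet}(V)\rightarrow\Omega^{\bullet}(U\cap V)\rightarrow0$ exactly as in the de Rham case, the paper simply citing the standard construction rather than writing out the exactness checks and the identification of $\delta$. Your verification of surjectivity of $\beta$ via $(\lambda_V\sigma,-\lambda_U\sigma)$ and the computation $d_{f,\theta}(\lambda_V\sigma)=d_f\lambda_V\wedge\sigma$ (using property (iii) of Proposition 2.1 and $d_{f,\theta}\sigma=0$) supplies precisely the details the paper leaves implicit, and reproduces the stated formula for the connecting homomorphism.
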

\begin{example}
Consider $M=\mathbb{R}^2-\{(-1,0),(1,0)\}$ and let $\theta$ and $\eta$ be a generator of $H^1_{dR}(M)$ supported in $(-\infty,0)\times \mathbb{R}$ and $U:=(0,\infty)\times\mathbb{R}$, respectively. Then taking into account that $d_{\theta}\eta=0$ and the fact that $\eta|_U$ cannot be $d_{\theta|_U}$-exact, see \cite{H-R}, we easily obtain that $d_{f,\theta}(f\eta)=0$ and $f|_U\eta|_U$ cannot be $d_{f|U,\theta|_U}$-exact, for a smooth function on $M$. Using Mayer-Vietoris sequence for the cohomology $H^{\bullet}_{f,\theta}$ from \eqref{2} one can show that $f\eta$ generates $H^1_{f,\theta}(M)$
\end{example}

\subsection{Homotopy invariance}

\begin{definition}
(\cite{Mo2}). Let $M$ and $M^{'}$ two smooth manifolds and $f\in C^{\infty}(M)$ and $f^{'}\in C^{\infty}(M^{'})$.
A \textit{morphism} from the pair $(M, f)$ to the pair $(M^{'}, f^{'})$ is a pair $(\phi, \alpha)$ formed by a morphism (smooth map) $\phi:M\rightarrow M^{'}$ and a real valued function $\alpha:M\rightarrow \mathbb{R}$, such that $\alpha$ does not vanish on $M$ and $\phi^*f^{'}=f^{'}\circ \phi=\alpha f$. 
\end{definition}
We will say that the pairs $(M,f)$ and $(M^{'},f^{'})$ are \textit{equivalent} if there exists a morphism $\Phi=(\phi,\alpha)$ between these two pairs where $\phi$ is a diffeomorphism. This notion of equivalence between the pairs is sometimes called "contact equivalence" in singularity theory. In \cite{Mo2} is proved that a morphism $\Phi=(\phi,\alpha)$ from the pair $(M,f)$ to the pair $(M^{'},f^{'})$ induces a chain map $\Phi^*:\left(\Omega^{\bullet}(M^{'}),d_{f^{'}}\right)\rightarrow\left(\Omega^{\bullet}(M),d_{f}\right)$ defined by
\begin{displaymath}
\Phi^*:\Omega^{r}(M^{'})\rightarrow\Omega^{r}(M)\,,\,\Phi^*(\varphi)=\frac{\phi^*\varphi}{\alpha^r},
\end{displaymath}
and this map  induces an homomorphism in cohomology, that is $\Phi^*:H^{\bullet}_{f^{'}}(M^{'})\rightarrow H^{\bullet}_f(M)$.  If $\Phi$ is diffeomorphism then $H^{r}_{f^{'}}(M^{'})$ and $H^{r}_{f}(M)$ are isomorphic. 

Now, taking into account that for any $\varphi\in\Omega^r(M^{'})$ we have $\Phi^*(d_{f^{'}}\varphi)=d_f(\Phi^*(\varphi))$, see \cite{Mo2}, by direct calculus we obtain
\begin{equation}
\Phi^*(d_{f^{'},\theta}\varphi)=d_{f,\phi^*\theta}(\Phi^*(\varphi))
\label{II9}
\end{equation}
for any closed $1$-form $\theta$ on $M^{'}$ and $\varphi\in\Omega^r(M^{'})$.

Thus $\Phi$ induces an homomorphism in Lichnerowicz type cohomology attached to a function $\Phi^*:H^{\bullet}_{f^{'},\theta}(M^{'})\rightarrow H^{\bullet}_{f,\phi^*\theta}(M)$.  Moreover, if $\Phi$ is diffeomorphism then $H^{r}_{f^{'},\theta}(M^{'})$ and $H^{r}_{f,\phi^*\theta}(M)$ are isomorphic.
\begin{remark}
For $\alpha=1$ then we obtain the homomorphism from \eqref{1}.
\end{remark}
\begin{definition}
(\cite{Mo2}). A \textit{homotopy} from the pair $(M,f)$ to the pair $(M^{'},f^{'})$ is given by two smooth maps
\begin{displaymath}
h:M\times[0,1]\rightarrow M^{'}\,,\,a:M\times[0,1]\rightarrow\mathbb{R},
\end{displaymath}
such that for each $t\in[0,1]$, we have a morphism
\begin{displaymath}
H_t\equiv(h_t,a_t):(M,f)\rightarrow(M^{'},f^{'})
\end{displaymath}
(i.e., $a$ does not vanish, $(f^{'}\circ h)(x,t)=a(x,t)f(x)$), where $h_t=h(\cdot,t),\,a_t=a(\cdot,t)$.
\end{definition}
Now, if $H=(h,a)$ is a homotopy from $(M,f)$ to $(M^{'},f^{'})$, from above discussion we obtain a map at cohomology level
\begin{displaymath}
H_t^*:H^{\bullet}_{f^{'},\theta}(M^{'})\rightarrow H^{\bullet}_{f,h^*_t\theta}(M).
\end{displaymath}
For the Lichnerowicz cohomology $H^{\bullet}_{\theta}(M)$ the problem of homotopy invariance is solved by Lemma 1.1 from \cite{H-R}. For the cohomology attached to a function $H^{\bullet}_f(M)$ the problem of homotopy invariance is the following: given a homotopy $H$, from
$(M,f)$ to $(M^{'},f^{'})$, is it true that $H^*_0=H^*_1$ at the cohomology level?  This problem is partial solved in \cite{Mo2} namely:  If the complements of the zero level sets of $f$ and $f^{'}$ are dense sets, then in degree zero we do have $H^*_0=H^*_1:H^0_f(M)\rightarrow H^0_{f^{'}}(M^{'})$. For higher degree, a partial result in the regular case is also given in \cite{Mo2}. For our Lichnerowicz type cohomology attached to a function $H^{\bullet}_{f,\theta}$ this problem is still open, but in the next section we prove a homotopy invariance in the regular case.

\subsection{K\"{u}nneth type formula}
The purpose of K\"{u}nneth formula is the computation of the cohomology of the Cartesian product when we known the cohomologies of the factors.

Suppose we have two manifolds $M_1$, $M_2$ and two closed $1$-forms $\theta_1$ and $\theta_2$, on $M_1$ and $M_2$, respectively. Let $\theta:={\rm pr}_1^*\theta_1+{\rm pr}_2^*\theta_2\in\Omega^1(M_1\times M_2)$ which is also closed. Then one defines a mapping
\begin{displaymath}
\Psi:\Omega^k(M_1)\times\Omega^l(M_2)\rightarrow\Omega^{k+l}(M_1\times M_2)\,,\,\Psi(\varphi,\psi)={\rm pr}_1^*\varphi\wedge{\rm pr}_2^*\psi.
\end{displaymath}
Then if we consider two smooth functions $f_1$ and $f_2$ on $M_1$ and $M_2$, respectively such that $f:={\rm pr}_1^*f_1={\rm pr}_2^*f_2$ then by direct computation, we obtain 
\begin{displaymath}
d_{f,\theta}(\Psi(\varphi,\psi))=\Psi(d_{f_1,\theta_1}\varphi,\psi)+(-1)^{\deg \varphi}\Psi(\varphi,d_{f_2,\theta_2}\psi)
\end{displaymath}
and, hence we have an induced mapping
\begin{equation}
\label{k}
H^{\bullet}_{f_1,\theta_1}(M_1)\otimes H^{\bullet}_{f_2,\theta_2}(M_2)\rightarrow H^{\bullet}_{f,\theta}(M_1\times M_2).
\end{equation}

According to \cite{B-T}, a covering $\mathcal{U}$ of a manifold $M$ is said to be \textit{good}, if for all $n\in\mathbb{N}$ and $U_1,\ldots,U_n\in\mathcal{U}$ the intersection $U_1\cap U_2\ldots\cap U_n$ is either empty or contractible. 

We have the following K\"{u}nneth type formula for our cohomology.
\begin{theorem}
Suppose that $M_1$ and $M_2$ have good covers and let $\theta_1$ a closed $1$-form on $M_1$ and $\theta_2$ a closed $1$-form on $M_2$, respectively. Also, we consider two smooth functions $f_1$ and $f_2$ on $M_1$ and $M_2$, respectively such that $f:={\rm pr}_1^*f_1={\rm pr}_2^*f_2$. Then, the map from \eqref{k} is an isomorphism.
\end{theorem}

\section{The regular case}
\setcounter{equation}{0}
In this section we study the \textit{regular} case i.e., the case where the function $f$ does not have singularities in a neighborhood of its zero set (i.e. $0$ is a regular value). The subset $S =f^{-1}(\{0\})$ is then an embedded submanifold of $M$. We also assume that $S$ is connected. In this case, the cohomology attached to a function $H^{\bullet}_f(M)$ is related with the de Rham cohomologies $H^{\bullet}_{dR}(M)$ and $H^{\bullet-1}_{dR}(S)$, see \cite{Mo2}. Similarly, we can relate in this case the Lichnerowicz type cohomology attached to a function $H^{\bullet}_{f,\theta}(M)$ with the Lichnerowicz cohomologies $H^{\bullet}_{\theta}(M)$ and $H^{\bullet-1}_{i^*\theta}(S)$, where $i:S\rightarrow M$ is the natural inclusion.  Also in this regular case, we obtain a homotopy invariance.
\begin{theorem}
\label{thmain}
If $0$ is a regular value of $f$ then, for each $r\geq1$, there is an isomorphism
\begin{equation}
H^r_{f,\theta}(M)\cong H^r_{\theta}(M)\oplus H^{r-1}_{i^*\theta}(S),
\label{r1}
\end{equation}
for any closed $1$-form $\theta$ on $M$ without singularities.
\end{theorem}
\begin{proof}
The proof follows in a similar manner with the proof of Theorem 4.1 from \cite{Mo2} and we need to briefly recall some preliminary results.

Let $U\subset U^{'}$ be tubular neighborhoods of $S$. We may assume that $U=S\times]-\varepsilon,\varepsilon[$ and $U^{'}=S\times]-\varepsilon^{'},\varepsilon^{'}[$, with $\varepsilon^{'}>\varepsilon$, and that 
\begin{displaymath}
f|_{U^{'}}:S\times]-\varepsilon^{'},\varepsilon^{'}[\rightarrow\mathbb{R}\,,\,(x,t)\mapsto t.
\end{displaymath}
Let us consider the projection $\pi:U^{'}\rightarrow S$ and $\rho:\mathbb{R}\rightarrow\mathbb{R}$ be a smooth function which is $1$ on $[-\varepsilon,\varepsilon]$ and has support contained in $[-\varepsilon^{'},\varepsilon^{'}]$. Note that the function $\rho\circ f$ is $1$ on $U$, and we can assume that the function $\rho\circ f$ vanishes on $M\setminus U^{'}$. 

If $\psi$ is a form on $S$, we will denote by $\overline{\psi}$ the form $\rho(f)\pi^*\psi$. Notice that from
\begin{displaymath}
d\overline{\psi}=\rho(f)\pi^*(d\psi)+\rho^{'}(f)df\wedge\pi^*\psi
\end{displaymath}
it easily follows that
\begin{equation}
df\wedge d\overline{\psi}=df\wedge\overline{d\psi}.
\label{r2}
\end{equation}
Now we notice that for the closed $1$-form $\theta$ on $M$ we have
\begin{equation}
\theta|_{U^{'}}\wedge\overline{\psi}=(i\circ\pi)^*\theta\wedge\overline{\psi}=\rho(f)\pi^*(i^*\theta)\wedge\pi^*\psi=\rho(f)\pi^*(i^*\theta\wedge\psi)=\overline{i^*\theta\wedge\psi},
\label{r3}
\end{equation}
for any form $\psi$ on $S$.

In the sequel we denote by $\zeta$ the linear application
\begin{equation}
\zeta:\Omega^r(M)\oplus\Omega^{r-1}(S)\rightarrow \Omega^r(M)\,,\,\zeta(\varphi,\psi)=f^r\varphi+f^{r-1}df\wedge\overline{\psi}.
\label{r4}
\end{equation}
If $(\varphi,\psi)\in\Omega^r(M)\oplus\Omega^{r-1}(S)$, with $d_{\theta}\varphi=0$ and $d_{i^*\theta}\psi=0$, then using \eqref{r2} and \eqref{r3}, we find
\begin{eqnarray*}
d_{f,\theta}(\zeta(\varphi,\psi)) &=& f^{r+1}d_{\theta}\varphi-f^rdf\wedge(d\overline{\psi}-\theta|_{U^{'}}\wedge\overline{\psi})\\
&=& f^{r+1}d_{\theta}\varphi-f^rdf\wedge\overline{d_{i^*\theta}\psi}=0.
\end{eqnarray*}
Similarly, one checks that if $\varphi\in\Omega^{r-1}(M)$ and $\psi\in\Omega^{r-2}(S)$, then
\begin{displaymath}
\zeta(d_{\theta}\varphi,d_{i^*\theta}\psi)=d_{f,\theta}(f^{r-1}\varphi-f^{r-2}df\wedge\overline{\psi}).
\end{displaymath}
We conclude that $\zeta$ induces a map at the level of cohomology
\begin{equation}
\zeta^*:H^{\bullet}_{\theta}(M)\oplus H^{\bullet-1}_{i^*\theta}(S)\rightarrow H^{\bullet}_{f,\theta}(M)\,,\,\zeta^*([\varphi],[\psi])=[\zeta(\varphi,\psi)].
\label{r5}
\end{equation}
Finally, according to \cite{Mo2}, $\zeta$ is bijective for all $r\geq1$ and so the theorem follows.
\end{proof}
\begin{example}
Let $S^1=\{(x_1,x_2)\in\mathbb{R}^2\,|\,x_1^2+x_2^2=1\}$ be the $1$-sphere and $f:S^1\rightarrow\mathbb{R}$ the function $f(x_1,x_2)=x_1$, so that $S=f^{-1}(\{0\})$ is the equator. Then taking into account that $H^0_{\theta}(S^1)=0$, for any closed, non-exact $1$-form $\theta$ on $S^1$, see Example 1.6 from \cite{H-R}, we obtain $H^0_{f,\theta}(S^1)=0$.
\end{example}
In the regular case we have the following homotopy invariance:
\begin{proposition}
Let $U$ and $W$ be tubular neighborhoods of $S_f =f^{-1}(0)$ and $S_g=g^{-1}(0)$, respectively.
We assume that $f$ and $g$ do not have singularities on $U$ and $W$. If $H_t$ is a homotopy from $(U, f )$ to $(W, g)$, then the induced linear applications between the cohomology spaces are the same: $H_1^*=H_0^*$.
\end{proposition}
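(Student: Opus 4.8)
The plan is to reduce the statement, via the splitting proved in Theorem 3.1, to the homotopy invariance of ordinary Lichnerowicz cohomology, which is already available through Lemma 1.1 of \cite{H-R}. Two preliminary normalizations are needed. First, since $\theta$ is closed and $h_0$ is smoothly homotopic to $h_1$, the forms $h_0^*\theta$ and $h_1^*\theta$ differ by an exact form $d\sigma$, where one may take $\sigma=\int_0^1\iota_{\partial_t}h^*\theta\,dt$; by Proposition 2.4 multiplication by $e^{\pm\sigma}$ identifies $H^\bullet_{f,h_0^*\theta}(U)$ with $H^\bullet_{f,h_1^*\theta}(U)$, and the asserted equality $H_1^*=H_0^*$ is to be read after this identification. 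Second, the relation $g\circ h_t=a_t f$ with $a_t$ nowhere zero forces $h_t^{-1}(S_g)=S_f$, so each $h_t$ restricts to a smooth map $h_t|_{S_f}\colon S_f\to S_g$, and $h$ itself restricts to a smooth homotopy $S_f\times[0,1]\to S_g$ between $h_0|_{S_f}$ and $h_1|_{S_f}$.

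Since $0$ is a regular value of $g$ on $W$ and of $f$ on $U$, Theorem 3.1 provides, for every $r\ge 1$, isomorphisms $H^r_{g,\theta}(W)\cong H^r_\theta(W)\oplus H^{r-1}_{i_W^*\theta}(S_g)$ and $H^r_{f,h_t^*\theta}(U)\cong H^r_{h_t^*\theta}(U)\oplus H^{r-1}_{i_U^*h_t^*\theta}(S_f)$, where $i_W\colon S_g\hookrightarrow W$ and $i_U\colon S_f\hookrightarrow U$ are the inclusions. The heart of the argument is to check that, under these two isomorphisms $\zeta^*$, the induced map $H_t^*=\Phi_t^*$ becomes the pair of genuine Lichnerowicz pullbacks: $h_t^*\colon H^\bullet_\theta(W)\to H^\bullet_{h_t^*\theta}(U)$ on the first summand and $(h_t|_{S_f})^*\colon H^{\bullet-1}_{i_W^*\theta}(S_g)\to H^{\bullet-1}_{i_U^*h_t^*\theta}(S_f)$ on the second. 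Granting this, I would apply Lemma 1.1 of \cite{H-R} twice: to the homotopy $h\colon U\times[0,1]\to W$, giving $h_0^*=h_1^*$ on the first factor, and to its restriction $S_f\times[0,1]\to S_g$, giving $(h_0|_{S_f})^*=(h_1|_{S_f})^*$ on the second factor. Transporting these equalities back through the two splittings yields $H_0^*=H_1^*$.

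The step I expect to be the main obstacle is exactly the compatibility of $\Phi_t^*$ with the decomposition $\zeta^*$. Recall $\zeta_W(\varphi_0,\psi_0)=g^r\varphi_0+g^{r-1}dg\wedge\overline{\psi_0}$ with $\overline{\psi_0}=\rho(g)\pi_W^*\psi_0$. Applying $\Phi_t^*(\varphi)=h_t^*\varphi/a_t^r$ and using $h_t^*g=a_t f$ together with $h_t^*(dg)=a_t\,df+f\,da_t$, one finds $\Phi_t^*(\zeta_W(\varphi_0,\psi_0))=f^r h_t^*\varphi_0+f^{r-1}df\wedge h_t^*\overline{\psi_0}+f^r\,d(\log|a_t|)\wedge h_t^*\overline{\psi_0}$. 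The first two terms are the image under $\zeta_U$ of the pullbacks, up to reconciling the cut-off $\rho(a_t f)$ and the projection $\pi_W\circ h_t$ appearing in $h_t^*\overline{\psi_0}$ with the corresponding data $\rho(f)$, $\pi_U$ used to build the operation on $U$; the third term is a genuine correction, produced by the nonconstant factor $a_t$. The work is to show that this correction term does not change the class in $H^\bullet_{f,h_t^*\theta}(U)$, which is precisely where the twist by $\theta$ and the identification of Proposition 2.4 (through $\sigma$ and $\log|a_t|$) enter. Once the correction is seen to be $d_{f,h_t^*\theta}$-exact, the required diagram commutes and the proposition follows; the remaining verifications—that $h_t$ restricts to the zero sets, and the naturality of the maps $\zeta^*$—are routine.
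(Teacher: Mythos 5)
Your overall strategy is the paper's own: conjugate $H_t^*$ by the splitting isomorphisms coming from Theorem 3.1, show the conjugated map is just the pair of Lichnerowicz pullbacks, and then quote the homotopy invariance of Lichnerowicz cohomology (Lemma 1.1 of \cite{H-R}). Your pullback computation is also correct: using $h_t^*g=a_tf$ and $h_t^*(dg)=a_t\,df+f\,da_t$ one does get $\Phi_t^*(\zeta_W(\varphi_0,\psi_0))=f^rh_t^*\varphi_0+f^{r-1}df\wedge h_t^*\overline{\psi_0}+f^rd(\log|a_t|)\wedge h_t^*\overline{\psi_0}$. The problem is that the step you yourself call the main obstacle is never carried out, and in the form you set it up it is genuinely harder than you suggest. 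With the global maps $\zeta$ of Theorem 3.1 the form $h_t^*\overline{\psi_0}$ is \emph{not} $d_{h_t^*\theta}$-closed: from \eqref{r3} one gets $d_{\theta}\overline{\psi_0}=\rho'(g)\,dg\wedge\pi_W^*\psi_0$, which is nonzero on the annulus where $\rho'\neq0$. Consequently you cannot conclude, via \eqref{0}, that the correction equals $d_{f,h_t^*\theta}\bigl(f^{r-1}\log|a_t|\,h_t^*\overline{\psi_0}\bigr)$; a residual term $-f^r\log|a_t|\,\rho'(a_tf)\,(a_t\,df+f\,da_t)\wedge(\pi_W\circ h_t)^*\psi_0$ survives, and it is entangled with the other discrepancy you defer, namely that $h_t^*\overline{\psi_0}$ is built from the cut-off $\rho(a_tf)$ and the map $\pi_W\circ h_t$, whereas $\zeta_U$ of the restricted pullback uses $\rho(f)$ and $h_t|_{S_f}\circ\pi_U$ (these maps are homotopic, not equal). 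None of this is fatal, but it is exactly the content of the proposition, not a routine verification. Also, Proposition 2.4 is not the mechanism that kills the correction term; it only serves to identify the target groups for $t=0$ and $t=1$.

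The paper removes all of these complications \emph{before} computing: it normalizes both tubular neighborhoods to products, $U=S_f\times]-\varepsilon,\varepsilon[$ with $f(x,\rho)=\rho$ and $W=S_g\times]-\varepsilon,\varepsilon[$ with $g(y,\tau)=\tau$, and uses the splitting in the cut-off-free form $\Psi_f([\varphi],[\psi])=[\rho^r\varphi+\rho^{r-1}d\rho\wedge\psi]$, where \emph{both} $\varphi$ and $\psi$ are forms on $U$ (resp.\ on $W$) rather than on $S_f$ (resp.\ $S_g$). Then $K_t^*=\Psi_f^{-1}\circ H_t^*\circ\Psi_g$ is computed exactly as in your formula, and since now $d_{h_t^*(\theta|_W)}h_t^*\psi=h_t^*(d_{\theta|_W}\psi)=0$ holds on the nose, the correction term is absorbed into the first summand: $h_t^*\varphi+d(\log|a_t|)\wedge h_t^*\psi=h_t^*\varphi+d_{h_t^*(\theta|_W)}(\log|a_t|\,h_t^*\psi)$, which represents the same class as $h_t^*\varphi$. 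Hence $K_t^*([\varphi],[\psi])=([h_t^*\varphi],[h_t^*\psi])$, and a single application of Lemma 1.1 of \cite{H-R} (no separate argument on $S_f$, $S_g$ is needed) gives $K_0^*=K_1^*$, so $H_0^*=H_1^*$. If you adopt this product normalization, which is legitimate since the statement concerns tubular neighborhoods of regular zero sets, your argument closes; without it, you still owe the exactness of the correction term and the reconciliation of the cut-offs and projections.
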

\begin{proof}
We can assume that $U=S_f\times]-\varepsilon,\varepsilon[$ and $W=S_g\times]-\varepsilon,\varepsilon[$, with 
\begin{displaymath}
f(x,\rho)=\rho\,,\,g(y,\tau)=\tau.
\end{displaymath}
We denote by $\Psi_f$ and $\Psi_g$ the linear maps:
\begin{displaymath}
\Psi_f:H^r_{h_t^*(\theta|_W)}(U)\oplus H^{r-1}_{h_t^*(\theta|_W)}(U)\rightarrow H^r_{f,h_t^*(\theta|_W)}(U)\,,\,\Psi_f([\varphi],[\psi])=[\rho^r\varphi+\rho^{r-1}d\rho\wedge\psi],
\end{displaymath}
\begin{displaymath}
\Psi_g:H^r_{\theta|_W}(W)\oplus H^{r-1}_{\theta|_W}(W)\rightarrow H^r_{g,\theta|_W}(W)\,,\,\Psi_g([\varphi],[\psi])=[\tau^r\varphi+\tau^{r-1}d\tau\wedge\psi],
\end{displaymath}
which by the previous theorem, are isomorphisms.

Now, we set $K^*_t=\Psi_f^{-1}\circ H_t^*\circ\Psi_g$, for every $t\in[0,1]$. If $([\varphi],[\psi])\in H^r_{\theta|_W}(W)\oplus H^{r-1}_{\theta|_W}(W)$, then by a similar calculus as in the proof of Proposition 4.12 from \cite{Mo2} we have
\begin{displaymath}
H_t^*(\Psi_g([\varphi],[\psi]))=\left[\rho^rh_t^*\varphi+\rho^rd(\log |a_t|)\wedge h_t^*\psi+\rho^{r-1}d\rho\wedge h_t^*\psi\right].
\end{displaymath}
Now taking into account $d_{h_t^*(\theta|_W)}h_t^*\psi=h_t^*(d_{\theta|_W}\psi)=0$, we conclude that 
\begin{eqnarray*}
K_t^*([\varphi],[\psi]) &=& \left([h_t^*\varphi+d_{h_t^*(\theta|_W)}(\log |a_t|h_t^*\psi)], [h_t^*\psi]\right)\\
&=& \left([h_t^*\varphi], [h_t^*\psi]\right).
\end{eqnarray*}
Since the Lichnerowicz cohomology is homotopy invariant, see Lemma 1.1 from \cite{H-R}, we have $K_1^*=K_0^*$ and it follows that $H_1^*=H_0^*$. 
\end{proof}
Finally, following step by step the proof of Proposition 4.13 from \cite{Mo2}, we obtain
\begin{proposition}
Let $H_t$ be a homotopy from $(M,f)$ to $(N,g)$. We suppose that $f$ and $g$ do not have singularities on tubular neighborhoods of $S_f$ and $S_g$, respectively. If $H^{r-1}_{(h_0^*\theta)|_S}(S)$ is trivial, then $H_0^*=H_1^*:H^r_{g,\theta}(N)\rightarrow H^r_{f,h_0^*\theta}(M)$, for every closed $1$-form $\theta$ on $N$. 
\end{proposition}

\section*{Acknowledgment}
The author cordially thanks to Referee(s) for useful comments and suggestions about the initial submission which improve substantially the presentation and the contents of this paper.

\noindent
Cristian Ida\\
Department of Mathematics and Computer Science\\
University Transilvania of Bra\c{s}ov\\
Address: Bra\c{s}ov 500091, Str. Iuliu Maniu 50, Rom\^{a}nia\\
email: \textit{cristian.ida@unitbv.ro}

\smallskip

\end{document}